\title{Gaussian measures of dilations of convex rotationally symmetric sets in $\C^n$}
\author{Tomasz Tkocz}
\address{College of Inter-Faculty Individual Studies in Mathematics and Natural Sciences and\\ Institute of Mathematics \\ University of Warsaw \\ Banacha 2 \\ 02-097 Warszawa \\ Poland}
\email{t.tkocz@students.mimuw.edu.pl}
\keywords{Gaussian measure, convex bodies, isoperimetric inequalities}
\subjclass[2010]{Primary 60E15; Secondary 60G15}
\newtheorem{thm}{Theorem}
\newtheorem{lm}{Lemma}
\newtheorem{cor}{Corollary}
\theoremstyle{definition}
\newtheorem{remark}{Remark}
\newcommand{\C}{\mathbb{C}}
\newcommand{\R}{\mathbb{R}}
\newcommand{\I}{\mathbb{1}}
\newcommand{\dd}{\mathrm{d}}
\newcommand{\s}[2]{\langle\!#1,#2\!\rangle}
\newcommand{\fun}[3]{#1\colon #2 \longrightarrow #3}
\begin{document}

\begin{abstract}
We consider the complex case of the so-called \emph{S-inequality}. It concerns the behaviour of the Gaussian measures of dilations of convex and rotationally symmetric sets in $\C^n$ (rotational symmetry is invariance under the transformation $z \mapsto e^{it}z$, for any real $t$). We pose and discuss a conjecture that among all such sets the measure of cylinders (i.e. the sets $\{z \in \C^n \ | \ |z_1| \leq p\}$) decrease the fastest under dilations. 

Our main result of the paper is that this conjecture holds under the additional assumption that the Gaussian measure of considered sets is not greater than some constant $c > 0.64$.
\end{abstract}

\maketitle

\section*{Introduction}\label{sec:intro}

Let us consider the standard Gaussian measure $\nu_n$ on $\C^n$, i.e.
\[ \nu_n(B) = \frac{1}{\left (2\pi\right )^n}\int_{j(B)} \exp\left (-\sum_{k=1}^n(x_k^2+y_k^2)\right )\dd x_1\dd y_1 \ldots \dd x_n \dd y_n, \]
for any Borel set $B \subset \C^n$, where $\fun{j}{\C^n}{\R^{2n}}$ is the standard isomorphism $j((x_1+iy_1, \ldots, x_n+iy_n)) = (x_1, y_1, \ldots, x_n, y_n)$. Denote for any $z = (z_1, \ldots, z_n), w = (w_1, \ldots, w_n) \in \C^n$ by $\s{w}{z} = \sum_{k=1}^n w_k\bar{z_k}$ a scalar product on $\C^n$ and the norm generated by it as $\|z\| = \sqrt{\s{z}{z}}$.

Let $A \subset \C^n$ be a set, which is
\begin{itemize}
	\item convex,
	\item \textbf{rotationally symmetric}, i.e. for any $\lambda \in \C$, $|\lambda| = 1$, $a \in A$ implies that $\lambda a \in A$
\end{itemize}
and $P = \{z \in \C^n \ | \ |\s{z}{v}| \leq p\}$ be a \textbf{cylinder} such that $\nu_n(A) = \nu_n(P)$, where $v \in \C^n$ has  length 1 and $p \geq 0$ is a \textbf{radius} of $P$. We ask whether
\[ \nu_n(tA) \geq \nu_n(tP), \qquad \textrm{for $t\geq 1$}, \]
i.e. the measure of dilations of cylinders grows the slowest among all convex rotationally symmetric sets.

The analogous question in $\R^n$ has an affirmative answer which was shown by R. Lata{\l}a and K. Oleszkiewicz \cite{latole}. Following their method in the considered complex case we obtain a partial answer to the posted question. The main result is the following
\begin{thm}\label{thm1}
There exists a constant $c > 0.64$ such that for any convex rotationally symmetric set $A \subset \C^n$, with measure $\nu_n(A) \leq c$, and a cylinder $P = \{z \in \C^n \ | \ |z_1| \leq p\}$ satisfying $\nu_n(A) = \nu_n(P)$, we have
\begin{equation}\label{eq1}\tag{$*$}
\nu_n(tA) \leq \nu_n(tP), \qquad \textrm{for $0 \leq t \leq 1$}.
\end{equation}
\end{thm}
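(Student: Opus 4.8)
The plan is to transfer the method of Lata\l a and Oleszkiewicz \cite{latole} to the complex setting: one reduces the monotonicity statement $(*)$ to a dimension-free Gaussian perimeter inequality whose extremal sets are the cylinders, and then integrates a differential inequality. \emph{Preliminary reductions.} If $A$ has empty interior it lies in a complex hyperplane, so $\nu_n(A)=0=p$ and $(*)$ is trivial; hence assume $\mathrm{int}\,A\ne\emptyset$ (and $A\ne\C^n$, the latter being trivial too). Averaging the points $e^{it}a$ ($a\in A$) over $t\in[0,2\pi]$ and using convexity gives $0\in A$; a real supporting hyperplane of $A$ at $0$, combined with rotational symmetry, would force $A$ into a complex hyperplane, so in fact $0\in\mathrm{int}\,A$. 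Consequently $tA\subseteq A$ for $0\le t\le1$, so $\nu_n(tA)\le\nu_n(A)\le c<1$ along the whole dilation. By a standard approximation one may further assume $A$ is a bounded, smooth, strictly convex body.

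\emph{Reduction to a perimeter inequality.} Changing variables $z=tw$ and differentiating (equivalently, applying the Gaussian divergence theorem) yields
\[
t\,\frac{\dd}{\dd t}\,\nu_n(tA)=\int_{\partial(tA)}\langle z,\mathbf n(z)\rangle\,\dd\sigma_{\nu_n}(z)=:\mathcal P(tA),
\]
where $\mathbf n$ is the outer unit normal, $\sigma_{\nu_n}$ the Gaussian surface measure, and $\mathcal P(K)=\int_K(2n-\|z\|^2)\,\dd\nu_n$ (for the $\R^{2n}$-standard normalisation; other normalisations merely rescale). Writing $\mu(q):=\nu_n(\{z:|z_1|\le q\})=1-e^{-\alpha q^2}$ with $\alpha>0$ fixed by the normalisation, a short computation gives $\mathcal P(\{|z_1|\le q\})=q\mu'(q)=\Theta(\mu(q))$, where
\[
\Theta(u):=-2(1-u)\ln(1-u)
\]
is normalisation-independent. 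Hence $\bar g(t):=\nu_n(tP)=\mu(tp)$ solves $t\bar g'(t)=\Theta(\bar g(t))$, $\bar g(1)=\nu_n(A)$. The crux is therefore the \emph{perimeter inequality}
\[
\mathcal P(K)\ \ge\ \Theta(\nu_n(K))\qquad\text{for every convex rotationally symmetric }K\text{ with }\nu_n(K)\le c,
\]
with equality for cylinders.

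\emph{Integrating the differential inequality.} Granting the perimeter inequality, put $g(t):=\nu_n(tA)$; applying it to the convex rotationally symmetric sets $tA$ (of measure $\le\nu_n(A)\le c$) gives $tg'(t)\ge\Theta(g(t))$ on $(0,1]$. In the variable $\tau:=-\ln t\ge0$, the functions $G(\tau):=g(e^{-\tau})$ and $\bar G(\tau):=\bar g(e^{-\tau})$ satisfy $G'\le-\Theta(G)$, $\bar G'=-\Theta(\bar G)$ and $G(0)=\bar G(0)=\nu_n(A)$; since $\Theta$ is $C^1$, hence Lipschitz, on $[0,c]\subset[0,1)$ and both $G,\bar G$ take values there, the comparison theorem for ODEs yields $G\le\bar G$ on $[0,\infty)$, i.e.\ $\nu_n(tA)\le\nu_n(tP)$ for $0\le t\le1$, which is $(*)$.

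\emph{The main obstacle: the perimeter inequality.} All the difficulty, and the restriction $\nu_n(K)\le c$, lie here. The inequality is dimension-free and an equality for every cylinder (and, in $\C^1$, for every admissible set), so it is genuinely higher-dimensional. Following \cite{latole}, I would first shrink the class of competitors: every convex rotationally symmetric set is an intersection of cylinders $\{|\langle z,v\rangle|\le q\}$ — the rotational symmetrisation of a real half-space is exactly such a cylinder — and one seeks a symmetrisation that does not increase $\mathcal P$ while preserving $\nu_n$, reducing to bodies depending only on $|z_1|$ and $\|(z_2,\dots,z_n)\|$ and, ultimately, to a one-parameter family. Writing $\mathcal P$ and $\nu_n$ in polar coordinates then turns the inequality into an explicit one-variable calculus estimate, and a careful (non-numerical) analysis of that estimate should determine the admissible range of measures, numerically $c>0.64$. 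I expect the two genuine difficulties to be: (i) constructing and justifying such a measure-preserving, $\mathcal P$-non-increasing symmetrisation — this is the technical heart of \cite{latole}, and the complex geometry makes it more delicate; and (ii) the resulting one-dimensional inequality, whose validity deteriorates for large measures (heuristically, $\Theta$ is increasing only up to $u=1-1/e\approx0.632$, just below the threshold), which is precisely why Theorem~\ref{thm1} is stated with an upper bound on $\nu_n(A)$.
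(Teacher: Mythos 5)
Your reduction is essentially the paper's own step (I): with $\nu_A(t)=\nu_n(tA)$, the theorem amounts to the derivative/perimeter inequality $\nu_A'(1)\ \ge\ \nu_P'(1)=-2\bigl(1-\nu_n(A)\bigr)\ln\bigl(1-\nu_n(A)\bigr)$ for every convex rotationally symmetric $A$ with $\nu_n(A)\le c$, applied to each dilate $tA$ and integrated by ODE comparison; that part of your argument is correct. The genuine gap is that you never prove this perimeter inequality, and it is the entire content of the paper. Your text only outlines a programme (``one seeks a symmetrisation\dots should determine the admissible range of measures\dots I expect the two genuine difficulties to be\dots''), explicitly leaving open exactly the two steps that constitute the proof, and in particular the origin of the constant $c>0.64$ is never established. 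Moreover, the route you sketch (a measure-preserving, perimeter-non-increasing symmetrisation inside $\C^n$ reducing to bodies depending only on $|z_1|$ and $\|(z_2,\dots,z_n)\|$, then to a one-parameter family) is not what is done and there is no evidence it can be carried out. The paper instead proceeds by: (II) the convexity bound $\nu_A'(1)\ge w\,\nu_n^+(A)$ with $w$ the inradius; (III) the inclusion $A\subset\{|z_1|\le w\}$ from rotational symmetry, followed by Ehrhard symmetrization down to a set in $\R^3$ lying under the graph of a concave function; and then Theorem \ref{thm2}, the inequality $w\gamma_3^+(A)\ge p\gamma_3^+(P)$ for such sets of measure at most $c$, proved by showing that the functional $L(x)$ is nonincreasing, which in turn needs the technical Lemma \ref{lm3} (via Lemmas \ref{lm1} and \ref{lm2}) and yields the explicit constant $c=1-\exp\bigl(-1/(\pi e^{H^2/2}T(H))\bigr)>0.64$ with $H=F^{-1}\bigl(G(\sqrt{8/\pi})\bigr)$. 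None of this is present, even in outline, in your proposal.

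A secondary inaccuracy: your heuristic that the threshold reflects the function $-2(1-u)\ln(1-u)$ ceasing to increase at $u=1-1/e\approx0.632$ is not the actual mechanism. The paper conjectures the full comparison \eqref{eq1} for all measures; the restriction $\nu_n(A)\le c$ is an artifact of the lossy steps (II)--(III) above. Indeed, Remark \ref{remark2} exhibits a cylindrical frustum showing that the reduced $\R^3$ inequality of Theorem \ref{thm2} genuinely fails for large measures, and the numerical value $0.64$ comes from the constant $H$ and the estimate $p^2\le-2\ln(1-c)$, not from any monotonicity range of the cylinder's perimeter profile.
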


The paper is organized as follows. In Section \ref{sec:proof} we give the proof of the above theorem. Next, in Section \ref{sec:remarks}, some remarks concerning this theorem are stated. Especially, we discuss the possibility of omitting the restriction on measure assumed in Theorem \ref{thm1}, but weakening its assertion. Section \ref{sec:lemmas} is devoted to proofs of some auxiliary lemmas which have slightly technical character.

\section{Proof of the main result}\label{sec:proof}

Firstly, let us set up some notation. We put $|x| = \sqrt{x_1^2 + \ldots + x_n^2}$ for the standard norm of a vector $x = (x_1, \ldots, x_n) \in \R^n$. By $\gamma_n$ we denote the standard Gaussian measure in $\R^n$ and by $\gamma_n^+(A) := \underline{\lim}_{h \to 0+} (\gamma_n(A^h)-\gamma_n(A))/h$ --- Gaussian perimeter of $A \subset \R^n$, where $A^h := \{x \in \R^n \ | \ \textrm{dist}(x, A) \leq h\}$ is $h$-neighbourhood of $A$. Analogously, we define $\nu_n^+(A)$. Moreover, we will use functions
\begin{align*}
\Phi(x) &= \gamma_1((-\infty, x)) = \frac{1}{\sqrt{2\pi}}\int_{-\infty}^x e^{-t^2/2}\dd t, \\
T(x) &= 1 - \Phi(x).
\end{align*}

Conducting the same procedure as in the real case, presented in detail in \cite{latole}, we can reduce a proof of \eqref{eq1} to some kind of an isoperimetric problem in $\R^3$. However, we loose too much and a constraint involving a boundedness of the measure from above by $c$ appears. For the sake of the reader's convenience, that reduction is briefly presented below. 
\begin{enumerate}[(I)]
	\item For any measurable set $A \subset \C^n$ let $\nu_A(t) := \nu_n(tA)$. Then Theorem \ref{thm1} is equivalent to $\nu_{A}'(1) \geq \nu_{P}'(1)$, provided that $\nu_A(1) = \nu_P(1) \leq c$. Since $P$ is a cylinder we have $\nu_P'(1) = p\nu_n^+(P).$
	\item Convexity of $A$ gives $\nu_A'(1) \geq w\nu_n^+(A)$, where 
\[ w := \sup \{r \geq 0 \ | \ \{z \in \C^n \ | \ \|z\| < r\} \subset A\}. \]
The parameter $2w$ is in some sense the width of the set $A$.
	\item Rotational symmetry of $A$ gives that $A$ is included in some cylinder of the radius $w$. Without loss of generality we may assume that this cylinder is along the first axis, that is $A \subset \{z \in \C^n \ | \ |z_1| \leq w\}.$ Now we can apply Ehrhard's symmetrization \cite{Ehrhard} and obtain a set in $\R^3$
\[ \widetilde{A} = \{(x, y, t) \in \R^3 \ | \ t \leq f\left (\sqrt{x^2+y^2}\right ), \sqrt{x^2+y^2} \leq w\}, \]
where $\fun{f}{[0, w]}{\R\cup \{-\infty\}}$,
\begin{multline*}
f\left (\sqrt{x^2+y^2}\right ) \\:= \Phi^{-1}\left (\nu_{n-1}\left (\{(z_2, \ldots, z_n) \in \C^{n-1} \ | \ (x+iy, z_2, \ldots, z_n) \in A\}\right )\right ),
\end{multline*}
is a well defined (by the rotational symmetry of $A$) concave nonincreasing function (by the convexity of $A$ and Ehrhard's inequality \cite{Ehrhard}). Clearly, $\nu_n(A) = \gamma_3(\widetilde{A})$. The key property of this symmetrization is that $\nu_n^+(A) \geq \gamma_3^+(\widetilde{A})$. Obviously a symmetrized cylinder $P$ is a \textbf{cylinder} $\widetilde{P} = \{z \in \R^2 \ | \ |z| \leq p\}\times\R$ and $\nu_n^+(P) = pe^{-p^2/2} = \gamma_3^+(\widetilde{P})$.
\end{enumerate}
Summing up, in order to prove Theorem \ref{thm1} it is enough to show
\begin{thm}\label{thm2}
There exists a constant $c > 0.64$ with the following property. Let $A \subset \R^3$ be a set of the form
\[ A = \{(x, y, t) \in \R^3 \ | \ t \leq f\left (\sqrt{x^2+y^2}\right ), \sqrt{x^2+y^2} < w\}, \] 
where $\fun{f}{[0, w)}{\R}$ is some concave, nonincreasing, smooth function such that $f(x) \xrightarrow[x\to w-]{} -\infty$. Let $P = \{(x, y, t) \in \R^3 \ | \ \sqrt{x^2+y^2} \leq p\} \subset \R^3$ be a cylinder with the same measure as $A$, that is, $\gamma_3(A) = \gamma_3(P) = 1-e^{-p^2/2}$. Then
\begin{equation}\label{eq:thm2}
w\gamma_3^+(A) \geq p\gamma_3^+(P),
\end{equation}
provided that $\gamma_3(A) \leq c$.
\end{thm}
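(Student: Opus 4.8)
The plan is to make everything explicit in cylindrical coordinates and then reduce to a low--dimensional inequality. Write $r=\sqrt{x^2+y^2}$ and let $\varphi=\Phi'$ be the standard Gaussian density. Since $r$ has density $re^{-r^2/2}$ on $(0,\infty)$ under the planar Gaussian and integration over the angle produces a factor $2\pi$, one obtains
\[
\gamma_3(A)=\int_0^w \Phi\bigl(f(r)\bigr)\,re^{-r^2/2}\,\dd r,\qquad
\gamma_3^+(A)=\int_0^w \varphi\bigl(f(r)\bigr)\sqrt{1+f'(r)^2}\;re^{-r^2/2}\,\dd r,
\]
the hypothesis $f(r)\to-\infty$ ensuring that $\partial A$ has no lateral component, together with $\gamma_3^+(P)=pe^{-p^2/2}$; thus \eqref{eq:thm2} becomes the scalar statement $w\,\gamma_3^+(A)\ge p^2e^{-p^2/2}$. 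Note that $\Phi\le1$ forces $1-e^{-p^2/2}=\gamma_3(A)\le 1-e^{-w^2/2}$, i.e.\ $w\ge p$, and that both quantities depend on $A$ only through the nonincreasing function $g:=\Phi\circ f\colon[0,w)\to(0,1)$, with $g(w^-)=0$, and the right endpoint $w$ of its domain; indeed $\varphi(f)f'=\tfrac{\dd}{\dd r}\Phi(f)=g'$ and $\varphi(f)=\varphi(\Phi^{-1}(g))$, so after simplification $\gamma_3^+(A)=\int_0^w\sqrt{\varphi(\Phi^{-1}(g(r)))^2+g'(r)^2}\;re^{-r^2/2}\,\dd r$.

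Next I would bound the perimeter from below by quantities depending only on $g$ and $w$. From $\sqrt{\varphi(\Phi^{-1}(g))^2+g'^2}\ge -g'$ and an integration by parts (boundary terms vanish, as $re^{-r^2/2}=0$ at $r=0$ and $g(w^-)=0$) one gets $\gamma_3^+(A)\ge I(g):=\int_0^w g(r)(1-r^2)e^{-r^2/2}\,\dd r$; from $\sqrt{\varphi(\Phi^{-1}(g))^2+g'^2}\ge\varphi(\Phi^{-1}(g))$ one gets $\gamma_3^+(A)\ge J(g):=\int_0^w\varphi(\Phi^{-1}(g(r)))\,re^{-r^2/2}\,\dd r$, a functional concave in $g$ since $\varphi\circ\Phi^{-1}$ is concave on $[0,1]$; combining a fixed rotation with Minkowski's inequality yields the stronger $\gamma_3^+(A)\ge\sqrt{I(g)^2+J(g)^2}$. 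Both $I(g)$ and $J(g)$ are nonnegative, and along cylinders $g\to\I_{[0,p)}$, $w\to p$, $J\to0$, $I\to pe^{-p^2/2}$, so this bound is asymptotically sharp at the equality case of \eqref{eq:thm2}.

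It remains to prove $w\,\Gamma(g)\ge p^2e^{-p^2/2}$ for the chosen bound $\Gamma\in\{I,\sqrt{I^2+J^2}\}$, with $g$ ranging over nonincreasing maps $[0,w)\to[0,1]$ satisfying $g(w^-)=0$ and $\int_0^w g(r)re^{-r^2/2}\dd r=1-e^{-p^2/2}\le c$, and $w$ the right endpoint of the domain; it is crucial to retain the link $g(w^-)=0$, for otherwise the infimum drops below $p^2e^{-p^2/2}$. Since $I$ is linear and $J$ concave in $g$ under a single linear constraint, a localization argument reduces the competitors to two--step profiles $g=\I_{[0,a)}+\beta\I_{[a,b)}$ with $0\le a\le p\le b<w$ and $\beta$ determined by the mass; as the extra factor is minimized by letting $w\downarrow b$, the problem collapses to an inequality in the two parameters $(a,b)$, hence, after eliminating one of them, to a one--variable inequality. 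Using $\int_0^x(1-r^2)e^{-r^2/2}\dd r=xe^{-x^2/2}$ and $\int_0^x re^{-r^2/2}\dd r=1-e^{-x^2/2}$ turns it into an elementary estimate relating $\Phi$, $T$ and $\varphi$ (of the flavour of $T(x)\le\varphi(x)/x$, suitably sharpened).

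The verification of this last scalar inequality is the main obstacle. It is delicate because the perimeter bound is not tight away from cylinders --- the Minkowski step loses precisely when the boundary has nearly horizontal and nearly vertical parts at different radii --- so the reduced inequality, true with equality at the cylinder, persists only while $p$ stays below an explicit level; chasing that level is exactly what yields the constant $c>0.64$, and a sharper perimeter bound would raise it, presumably still short of $1$, which is why the present method does not settle the full conjecture. I would package the resulting one--variable inequality, together with the monotonicity and convexity of the auxiliary functions entering it, as the technical lemmas deferred to Section~\ref{sec:lemmas}.
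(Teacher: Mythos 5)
Your setup (cylindrical coordinates, the formulas for $\gamma_3(A)$, $\gamma_3^+(A)$ and $\gamma_3^+(P)=pe^{-p^2/2}$, and the reformulation in terms of $g=\Phi\circ f$) is correct, and the vector triangle inequality giving $\gamma_3^+(A)\ge\sqrt{I(g)^2+J(g)^2}$ is valid. But the argument has two genuine gaps at exactly the points where the theorem lives. First, the localization step is unjustified for the functional you actually need. The reduction of the minimization to two-step profiles via extreme points requires the functional to be concave on the convex set of nonincreasing $g$ with one linear constraint; $I$ is linear and $J$ is concave, but $\sqrt{I^2+J^2}$ is the composition of the \emph{convex} map $(I,J)\mapsto\sqrt{I^2+J^2}$ with these, and is neither concave nor convex in $g$, so its infimum need not be attained at two-step profiles. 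The branch where the reduction would be legitimate, $\Gamma=I$, leads to a false inequality: for an (approximate) cylindrical frustum $g\equiv\Phi(y)$ on $[0,w)$ with $y=0$ and $w=3$ one has $\gamma_3(A)\approx 0.49\le c$, $wI(g)=\tfrac12 w^2e^{-w^2/2}\approx 0.05$, while $p^2e^{-p^2/2}\approx 0.69$; so you cannot discard $J$, and hence you cannot invoke the extreme-point argument as stated. You would need either a supporting-line device (bounding $\sqrt{I^2+J^2}$ below by $\cos\theta\, I+\sin\theta\, J$ for a fixed $\theta$ and optimizing afterwards) or a different reduction, and you have not checked that any such weakened bound still closes.

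Second, and more seriously, the final scalar inequality --- the entire quantitative content of the statement --- is never formulated or verified. The theorem asserts a specific threshold $c>0.64$, and your proposal only says that ``chasing that level is exactly what yields the constant $c>0.64$''; nothing in the write-up shows that your (lossy) lower bound $w\sqrt{I^2+J^2}$ dominates $p^2e^{-p^2/2}$ for all admissible $g$ of measure up to $0.64$, as opposed to some much smaller constant, or indeed any constant at all. Since the Minkowski step discards exactly the configurations (mixed horizontal/vertical boundary at different radii) that become dangerous as the measure grows, the value of the achievable threshold is the whole difficulty, and it is left open. For comparison, the paper does not lower-bound the perimeter globally: it interpolates via $A(x)=A\cup\{|z|<x\}\times\R$ and the matched cylinders $P(x)$, shows the function $L(x)=w\gamma_3^+(B_2(x))+x\gamma_3^+(B_1(x))-a(x)\gamma_3^+(P(x))$ is nonincreasing, and reduces to the pointwise inequality \eqref{eq2}; that inequality is then handled by Lemma \ref{lm3} together with the monotonicity of $x\mapsto a(x)^2-x^2$ (so that $\sup(a(x)^2-x^2)=p^2$) and of $y\mapsto e^{y^2/2}T(y)$, which is precisely where the explicit constant $c=1-\exp\left(-\tfrac{1}{\pi e^{H^2/2}T(H)}\right)>0.64$ emerges. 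To make your route into a proof you would have to supply both the corrected reduction and an explicit verification of the reduced inequality with a concrete threshold at least $0.64$.
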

\begin{proof}
For fixed $x \in [0, w]$ let us define (this idea comes from \cite{latole})
\begin{align*}
A(x) &= A \cup \{z \in \R^2 \ | \ |z| < x\}\times\R, \\
P(x) &= \{z \in \R^2 \ | \ |z| < a(x)\}\times\R,
\end{align*}
where the function $a(x)$ is defined by the equation
\[ \gamma_3(A(x)) = \gamma_3(P(x)).\]
We have $\partial A(x) = B_1(x) \cup B_2(x)$, where $B_1(x) = \{(z, t) \in \R^2\times\R \ | \ |z| = x, t \geq f\left (|z|\right )\}$, $B_2(x) = \{(z, t) \in \R^2\times\R \ | \ |z| > x, t = f\left (|z|\right )\}$. Let
\[ L(x) = w\gamma_3^+(B_2(x)) + x\gamma_3^+(B_1(x)) - a(x)\gamma_3^+(P(x)),  \qquad x \in [0, w]. \]
Since $A(w)$ is a cylinder with the radius $w$, we have $L(w) = 0$. Also note that $L(0) = w\gamma_3^+(A) - p\gamma_3^+(P).$ Therefore it suffices to prove that $L$ is nonincreasing. 

We can easily calculate indispensable things to obtain $L'(x)$. Namely
\begin{align*}
\gamma_3^+(B_2(x)) &= \frac{1}{\sqrt{2\pi}}\int_x^w t\exp\left (-\frac{t^2+f(t)^2}{2}\right )\sqrt{1+f'(t)^2} \dd t, \\
\gamma_3^+(B_1(x)) &= \frac{1}{\sqrt{2\pi}^3}\int_0^{2\pi}\int_{f(x)}^\infty \exp\left (-\frac{x^2+t^2}{2}\right ) x\ \dd t\dd \phi \\
&= xe^{-x^2/2}(1-\Phi(f(x))) = xe^{-x^2/2}T(f(x)),\\
\gamma_3^+(A(x)) &= a(x)e^{-a(x)^2/2}.
\end{align*}
Putting these into the definition of $L$ we have
\begin{align*}
L(x) &= \frac{w}{\sqrt{2\pi}}\int_x^w t\exp\left (-\frac{t^2+f(t)^2}{2}\right )\sqrt{1+f'(t)^2} \dd t + x^2e^{-x^2/2}T(f(x)) \\
&- a(x)^2e^{-a(x)^2/2}.
\end{align*}
Moreover
\begin{align*} \gamma_3(A(x)) &= \gamma_3\left (\{z \in \R^2 \ | \ |z| < x\}\times\R\right ) \\
&+ \gamma_3\left (\{(z, t) \in \R^2\times\R \ | \ |z| > x, t \leq f(|z|)\}\right ) \\
&= 1-e^{-x^2/2} + \int_x^w te^{-t^2/2}\Phi(f(t))\dd t.
\end{align*}
Thus
\[ 1 - e^{-a(x)^2/2} = \gamma_3(P(x)) = \gamma_3(A(x)) = 1-e^{-x^2/2} + \int_x^w te^{-t^2/2}\Phi(f(t))\dd t,\]
and differentiating in $x$ we get
\[ a'(x)a(x)e^{-a(x)^2/2} = xe^{-x^2/2}(1-\Phi(f(x))) = xe^{-x^2/2}T(f(x)).\]
It allows us to compute $L'$. We have
\begin{align*}
L'(x) &= -\frac{w}{\sqrt{2\pi}} x\exp\left (-\frac{x^2+f(x)^2}{2}\right )\sqrt{1+f'(x)^2} \\
&+ e^{-x^2/2}\left (2xT(f(x)) - x^2\frac{e^{-f(x)^2/2}}{\sqrt{2\pi}}f'(x)-x^3T(f(x))\right ) \\
&- \left (2-a(x)^2\right )xe^{-x^2/2}T(f(x)).
\end{align*}
Simplifying a bit one gets that $L' \leq 0$ iff
\[ w\sqrt{1+f'(x)^2} + xf'(x) \geq (a(x)^2-x^2)\sqrt{2\pi}e^{f(x)^2/2}T(f(x)), \qquad x \in [0, w]. \]
Since $f' \leq 0$ ($f$ is nonincreasing) and $\inf_{t \leq 0} (w\sqrt{1+t^2}+xt) = \sqrt{w^2-x^2}$ we will have $L' \leq 0$ if we show that
\begin{equation}\label{eq2}
\sqrt{w^2-x^2} \geq (a(x)^2-x^2)\sqrt{2\pi}e^{f(x)^2/2}T(f(x)), \qquad x \in [0, w].
\end{equation}

Estimating $a(x)^2-x^2$ we can prove the above inequality in some special cases. Notice that monotonicity of $f$ implies $A(x) \subset \{z \in \R^2 \ | \ |z| < x\}\times\R \cup \{(z, t) \in \R^2\times\R \ | \ x \leq |z| \leq w, t \leq f(x)\}$, hence
\[ 1-e^{-a(x)^2/2} = \gamma_3(A(x)) \leq (1-e^{-x^2/2}) + (e^{-x^2/2}-e^{-w^2/2})\Phi(f(x)), \]
so
\begin{equation}\label{eq3}
a(x)^2 - x^2 \leq -2\ln\left (T(f(x)) + \Phi(f(x))e^{-(w^2-x^2)/2}\right ).
\end{equation}
According to it, in order to establish \eqref{eq2} it is enough to show
\[ \sqrt{w^2-x^2} \geq -2\sqrt{2\pi}e^{f(x)^2/2}T(f(x))\ln\left (T(f(x)) + \Phi(f(x))e^{-(w^2-x^2)/2}\right ). \]
In general the above inequality is not true. Nonetheless, there holds Lemma \ref{lm3}, the proof of which is deferred to the last section.

Let us introduce functions $\fun{F}{\R}{(0, \infty)}$, $\fun{G}{(0,\infty)}{(0,\infty)}$ given by formulas
\begin{align}\label{eq:defF}
F(y) &= -\sqrt{2\pi}e^{y^2/2}T(y)\ln T(y),\\
G(y) &= \frac{y}{2(1-e^{-y^2/2})}.\label{eq:defG}
\end{align}
Note that $F$ is increasing and onto (cf. Lemma \ref{lm1}). We will need the constant
\[ H = F^{-1}\left (G\left (\sqrt{8/\pi}\right )\right ). \]
\begin{lm}\label{lm3}
Let either
\begin{enumerate}[(i)]
\item $u \leq \sqrt{8/\pi}$, $y \in \R$, or
\item $u > \sqrt{8/\pi}$, $y \leq H$.
\end{enumerate}
Then
\[ -2\sqrt{2\pi}e^{y^2/2}T(y)\ln\left (T(y) + \Phi(y)e^{-u^2/2}\right ) \leq u.\]
\end{lm}
Applying it for $u = \sqrt{w^2-x^2}$, $y = f(x)$, the desired inequality is true for $x$ such that $\sqrt{w^2-x^2} \leq \sqrt{8/\pi}$ or $\sqrt{w^2-x^2} > \sqrt{8/\pi}$ and $f(x) \leq H$.

Therefore, it remains to prove \eqref{eq2} for $x$ satisfying $\sqrt{w^2-x^2} > \sqrt{8/\pi}$ and $f(x) > H$. Observe that 
\[ (a(x)^2-x^2)' = 2(a(x)a'(x)-x) = 2x\left (e^{(a(x)^2-x^2)/2}T(f(x)) - 1\right ), \]
but thanks to \eqref{eq3} we get
\[ e^{(a(x)^2-x^2)/2} < 1/T(f(x)), \]
hence
\[ (a(x)^2-x^2)' < 0. \]
Thus the function $[0,w] \ni x \longmapsto a(x)^2-x^2 \in [0,\infty)$ is decreasing. It yields
\[ \sup_{x\in[0,w]}(a(x)^2-x^2) = a(0)^2 = p^2.\]

Moreover, the function $x\longmapsto e^{f(x)^2/2}T(f(x))$ is nondecreasing on the interval $\{x\in [0,w] \ | \ f(x) > 0\}$ as a composition of the nonincreasing function $f$ and the decreasing one $y \longmapsto e^{y^2/2}T(y)$ for $y > 0$ (\cite{latole}, Lemma 1). Consequently
\[ \sup \left \{ e^{f(x)^2/2}T(f(x)) \ | \ f(x) > H\right \} = e^{H^2/2}T(H). \]
Combining these two observations and using the assumption $c \geq \gamma_3(A) = \gamma_3(P) = 1-e^{-p^2/2}$, that is $p^2 \leq -2\ln (1-c)$, we obtain that \eqref{eq2} holds for $x$ such that $\sqrt{w^2-x^2} > \sqrt{8/\pi}$ and $f(x) > H$. Indeed
\begin{align*}
(a(x)^2-x^2)\sqrt{2\pi}e^{f(x)^2/2}T(f(x)) &\leq \sqrt{2\pi}p^2e^{H^2/2}T(H) \\
&\leq -2\sqrt{2\pi}\ln (1-c)e^{H^2/2}T(H) \\
&= \sqrt{\frac{8}{\pi}} < \sqrt{w^2-x^2},
\end{align*}
where the definition of the constant $c$ emerges. Namely, we set
\[ c = 1-\exp\left (-\frac{1}{\pi e^{H^2/2}T(H)}\right ) > 0.64, \]
which completes the proof.
\end{proof}

\begin{remark}\label{remark1}
It might seem unclear why $c > 0.64$. However, it is very easy to verify. Firstly, we check by direct computation that $G(\sqrt{8/\pi}) > F(0.7)$, whence $H > 0.7$ by virtue of the monotonicity of $F$. Secondly, we observe that the dependence $c$ on $H$ is increasing as it was mentioned that $y \longmapsto e^{y^2/2}T(y)$ for $y > 0$ decreases. Thus
\[c = 1-\exp\left (-\frac{1}{\pi e^{0.7^2/2}T(0.7)}\right ) > 0.64.\]
\end{remark}

From the isoperymetric-like inequality \eqref{eq:thm2} proved in the last theorem we have already inferred (cf. steps (I)-(III) presented at the very beginning of this section) that
\[ \nu_n(A) = \nu_n(P) \leq c \quad \textrm{implies} \quad \nu_A'(1) \geq \nu_P'(1). \]
As it was said, this in turn gives the comparison of the measures of $A$ and of a cylinder $P$ when we shrink these sets by dilating them. We can also use this implication in order to predict to some extent what happens with measures when we expand our sets (the simple reasoning which ought to be repeated may be found in \cite{KwaSa})
\begin{cor}\label{cor1}
For any convex rotationally symmetric set $A \subset \C^n$, with measure $\nu_n(A) \leq c$, and a cylinder $P$ satisfying $\nu_n(A) = \nu_n(P)$, we have
\begin{equation}\label{eq:cor1}
\nu_n(tA) \geq \nu_n(tP), \qquad \textrm{for $1 \leq t \leq t_0$},
\end{equation}
where $t_0 \geq 1$ satisfies $\nu_n(t_0A) = c$.
\end{cor}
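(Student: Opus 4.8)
The plan is to derive the corollary from Theorem~\ref{thm1} by a pure scaling argument that exploits how simply cylinders behave under dilation. Write $\Psi(r):=\nu_n(\{z\in\C^n \ | \ |z_1|\leq r\})$ for the Gaussian measure of the cylinder of radius $r$; this is a continuous, strictly increasing bijection of $[0,\infty)$ onto $[0,1)$, and since $t\{z \ | \ |z_1|\leq r\}=\{z \ | \ |z_1|\leq tr\}$ we have $\nu_n(tP)=\Psi(tp)$ whenever $P$ has radius $p$. First I would record two elementary facts about $A$: being nonempty, convex and rotationally symmetric it contains, together with any $a$, also $-a=e^{i\pi}a$, hence $0=\frac{1}{2} a+\frac{1}{2}(-a)\in A$; consequently $sA\supseteq A$ for $s\geq1$, so $s\mapsto\nu_n(sA)$ is continuous and nondecreasing, which both makes $t_0$ well defined and yields $\nu_n(sA)\leq\nu_n(t_0A)=c$ for all $s\in[1,t_0]$.

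The core step is then to fix $t\in(1,t_0]$ and apply Theorem~\ref{thm1} to the dilated set $B:=tA$, which is again convex and rotationally symmetric with $\nu_n(B)\leq c$. Letting $Q$ be the cylinder of radius $q$ with $\nu_n(Q)=\nu_n(B)=\nu_n(tA)$, Theorem~\ref{thm1} gives $\nu_n(sB)\leq\nu_n(sQ)$ for $s\in[0,1]$; plugging in $s=1/t$ produces $\nu_n(A)=\nu_n((1/t)B)\leq\nu_n((1/t)Q)=\Psi(q/t)$. Since $\nu_n(P)=\nu_n(A)\leq\Psi(q/t)$ and $\Psi$ is increasing, the radius of $P$ obeys $p\leq q/t$, i.e. $q\geq tp$; applying $\Psi$ once more gives $\nu_n(tA)=\Psi(q)\geq\Psi(tp)=\nu_n(tP)$, which is precisely \eqref{eq:cor1}.

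I do not expect a real obstacle here --- this is the ``simple reasoning'' alluded to in the statement and it parallels the argument in \cite{KwaSa}. The only points that need care are exactly the auxiliary facts used above: that $0\in A$ (so that $s\mapsto\nu_n(sA)$ is monotone and $t_0$ exists), and the trivial scaling law $\nu_n(tP)=\Psi(tp)$ for cylinders together with the monotonicity of $\Psi$. One should also note the implicit hypothesis that $t_0<\infty$, i.e. that $\nu_n(tA)$ genuinely attains the level $c$: this is automatic when $A$ is full-dimensional, since then $tA$ eventually contains balls of arbitrarily large radius, and in the degenerate case $\nu_n(A)=0$ the assertion \eqref{eq:cor1} holds trivially with both sides vanishing.
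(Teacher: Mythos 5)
Your argument is correct, and it is a genuinely different (and self-contained) derivation from the one the paper has in mind. The paper obtains Corollary \ref{cor1} the way \cite{KwaSa} does: the isoperimetric-type inequality gives the derivative comparison $\nu_A'(1)\geq\nu_P'(1)$ whenever $\nu_n(A)=\nu_n(P)\leq c$, and one integrates this differential inequality along $t\in[1,t_0]$ (the constraint $\nu_n(tA)\leq c$ being preserved up to $t_0$) to get \eqref{eq:cor1}; the shrinking statement of Theorem \ref{thm1} is obtained from the same derivative inequality by integrating in the other direction. You instead take Theorem \ref{thm1} as a black box and apply it to the dilated set $B=tA$ (legitimate, since $\nu_n(tA)\leq\nu_n(t_0A)=c$ by the monotonicity you establish from $0\in A$), then invert through the cylinder-measure function $\Psi(r)=1-e^{-r^2/2}$: $\nu_n(A)\leq\Psi(q/t)$ forces $q\geq tp$, hence $\nu_n(tA)=\Psi(q)\geq\Psi(tp)=\nu_n(tP)$. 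This buys you a purely formal scaling/duality proof that needs no differentiation and no re-examination of the intermediate derivative inequality, at the cost of being tied to the specific closed form of the cylinder measure (monotonicity of $\Psi$), whereas the paper's route via \cite{KwaSa} is the one that naturally produces both directions at once from the pointwise derivative statement. Your auxiliary checks (that $0\in A$ by rotational symmetry plus convexity, the monotonicity and continuity of $s\mapsto\nu_n(sA)$, the existence of $t_0$ for full-dimensional $A$, and the trivial degenerate case $\nu_n(A)=0$) are exactly the points that need saying, and they are handled correctly.
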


\section{Some remarks}\label{sec:remarks}

\begin{remark}\label{remark2}
Generally,  without the assumption on the measure of a set $A$ Theorem \ref{thm2} fails. To see this let us consider a cylindrical frustum $A = \{(z,t) \in \R^2\times\R \ | \ |z| \leq w, t \leq y\}$ with the radius $w$ and the height $y$. This is not exactly a set as in the assumptions of Theorem \ref{thm2}, that is, lying under a graph of a smooth concave function (there is a problem with smoothness), but an easy approximation argument will fill in the gap. Take a cylinder $P = \{z \in \R^2 \ | \ |z| \leq p\}\times\R$ with the same measure as $A$, which means that $p$ is taken such that
\[ \Phi(y)(1-e^{-w^2/2}) = \gamma_3(A) = \gamma_3(P) = 1-e^{-p^2/2}.\]
We show that for some large enough $w$ and $y$ there actually holds the reverse inequality to that one stated in Theorem \ref{thm2}
\[ w\gamma_3^+(A) < p\gamma_3^+(P). \]
Indeed, let us fix the parameters of the cylindrical frustum such that
\[ e^{-w^2/2} = T(y), \quad y > 0. \]
Thus $1-e^{-w^2/2} = \Phi(y)$. To simplify some calculations, let us define a function 
\[ g(y) = \frac{1}{\sqrt{2\pi}e^{y^2/2}T(y)}. \]
Now, the relation between $w$ and $y$ may be written as $w^2 = -2\ln T(y) = y^2 + 2\ln \left (\sqrt{2\pi}g(y)\right )$. Furthermore, we have
\[ y < g(y) < \sqrt{y^2+2}, \qquad y>0 ,\]
(the left inequality is a standard estimation for $T(y)$ while the right one follows from Lemma 2, \cite{latole}) so
\begin{align*}
w\gamma_3^+(A) &= w\left (we^{-w^2/2}\Phi(y) + \frac{e^{-y^2/2}}{\sqrt{2\pi}}(1-e^{-w^2/2})\right ) \\
&= T(y)\left (w^2\Phi(y) + w\Phi(y)\frac{e^{-y^2/2}}{\sqrt{2\pi}T(y)}\right ) < T(y)\left (w^2\Phi(y) + wg(y)\right ) \\
&< T(y)\left (w^2\Phi(y) + \sqrt{y^2+2\ln\left (\sqrt{2\pi}g(y)\right )}\sqrt{y^2+2}\right ) \\
&\leq T(y)\left (w^2\Phi(y) + y^2 + \ln\left (\sqrt{2\pi}g(y)\right ) + 1\right ) \\
&= T(y)\left (w^2\left (1+\Phi(y)\right ) + 1 - \ln\left (\sqrt{2\pi}g(y)\right )\right ).
\end{align*}
Let us choose $y$ such that 
\[1 - \ln\left (\sqrt{2\pi}g(y)\right ) < -2(1+\Phi(y))\ln\left (1+\Phi(y)\right ).\]
Then we are able to continue our estimations as follows
\begin{align*}
w\gamma_3^+(A) &< T(y)\left (w^2\left (1+\Phi(y)\right ) -2(1+\Phi(y))\ln\left (1+\Phi(y)\right )\right ) \\
&= -2T(y)\left (1+\Phi(y)\right )\ln \left (e^{-w^2/2}\left (1+\Phi(y)\right )\right ) \\
&= p^2e^{-p^2/2} = p\gamma_3^+(P),
\end{align*}
because
\[ e^{-p^2/2} = 1 - \Phi(y)(1-e^{-w^2/2}) = T(y) + \Phi(y)e^{-w^2/2} = T(y)\left (1+\Phi(y)\right ).\]
\end{remark}

\begin{remark}\label{remark3}
In the previous remark we have seen that the assumption on the measure in Theorem \ref{thm2} is essential. Consequently, the technique which have been used leads from this theorem to Theorem \ref{thm1} also with the restriction on the measure. Nevertheless, we may obtain a weaker version of the inequality \eqref{eq1} dropping the inconvenient assumption that $\gamma_3(A) \leq c$. This result reads as follows
\begin{thm}\label{thm3}
There exists a constant $K = 3$ such that for any convex rotationally symmetric set $A \subset \C^n$ and a cylinder $P$ satisfying $\nu_n(A) = \nu_n(P)$, we have
\begin{equation}\label{eq:thm3}
\nu_n((1+K(t-1))A) \geq \nu_n(tP), \qquad \textrm{for $t \geq 1$}.
\end{equation}
\end{thm}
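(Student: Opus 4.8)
The plan is to translate \eqref{eq:thm3} into a one–dimensional differential inequality for the function $\psi(t):=-\ln\bigl(1-\nu_n(tA)\bigr)$, $t>0$. If $\nu_n(A)=0$ then $p=0$ and the assertion is trivial, so assume $A$ has nonempty interior; with $w>0$ its inradius (as in step (II)) we have, by step (III), $A\subset\{z:|z_1|\le w\}$, hence $\nu_n(A)=1-e^{-p^2/2}\le1-e^{-w^2/2}$, and so $w\ge p$. Since $\nu_n(tP)=1-e^{-t^2p^2/2}$, writing $\psi_1:=\psi(1)=p^2/2$ turns \eqref{eq:thm3} with $K=3$ into the inequality $\psi(3t-2)\ge t^2\psi_1$ for $t\ge1$. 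Now $\psi$ is nondecreasing and $3t-2\ge t$ for $t\ge1$, while Corollary \ref{cor1} asserts $\nu_n(tA)\ge\nu_n(tP)$, i.e.\ $\psi(t)\ge t^2\psi_1$, as long as $\nu_n(tA)\le c$. Hence $\psi(3t-2)\ge\psi(t)\ge t^2\psi_1$ already for $1\le t\le t_0$, where $t_0\ge1$ is defined by $\nu_n(t_0A)=c$ (put $t_0:=1$ if $\nu_n(A)>c$). The whole point is therefore to control $\psi$ in the regime $\nu_n(tA)>c$, where the reduction of Section \ref{sec:proof} no longer helps.

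For that I would use the Gaussian isoperimetric inequality in place of the reduction. Since $A$ is convex and $wB\subset A$ ($B$ the unit ball of $(\C^n,\|\cdot\|)$), one has $(t+h)A=tA+hA\supset(tA)^{hw}$ for $h>0$; as $t\mapsto\nu_n(tA)$ is smooth on $(0,\infty)$ this yields $\frac{\dd}{\dd t}\nu_n(tA)\ge w\,\nu_n^+(tA)$, and then, by the Gaussian isoperimetric inequality on $\R^{2n}$ together with $w\ge p$,
\[ \tfrac{\dd}{\dd t}\nu_n(tA)\ \ge\ w\, I\bigl(\nu_n(tA)\bigr)\ \ge\ p\, I\bigl(\nu_n(tA)\bigr),\qquad I:=\varphi\circ\Phi^{-1}, \]
where $\varphi$ is the standard Gaussian density. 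An elementary (if somewhat delicate) estimate of the profile, based on the two–sided bounds for $T$ recalled in Remark \ref{remark2}, gives $I(x)\ge\tfrac13(1-x)\sqrt{2\ln\tfrac1{1-x}}$ for $x\in[\tfrac12,1)$. Substituting $\nu_n(tA)=1-e^{-\psi(t)}$, so that $\frac{\dd}{\dd t}\nu_n(tA)=\psi'(t)e^{-\psi(t)}$, we obtain
\[ \psi'(t)\ \ge\ \tfrac13\,p\,\sqrt{2\psi(t)}\ =\ \tfrac23\sqrt{\psi_1}\,\sqrt{\psi(t)},\qquad\text{i.e.}\qquad\bigl(\sqrt{\psi(t)}\bigr)'\ \ge\ \tfrac13\sqrt{\psi_1}, \]
valid whenever $\nu_n(tA)\ge\tfrac12$, in particular for all $t\ge t_0$ (recall $c>0.64>\tfrac12$).

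It remains to integrate. For $r\ge t_0$,
\[ \sqrt{\psi(r)}\ \ge\ \sqrt{\psi(t_0)}+\tfrac13\sqrt{\psi_1}\,(r-t_0)\ \ge\ \sqrt{\psi_1}\,\Bigl(t_0+\tfrac13(r-t_0)\Bigr), \]
where $\psi(t_0)\ge t_0^2\psi_1$ comes from Corollary \ref{cor1} at $t=t_0$ (it is an equality when $t_0=1$). Taking $r=3t-2$, which is $\ge t_0$ for $t\ge t_0\ge1$, the right–hand side equals $\sqrt{\psi_1}\cdot\frac{2t_0+3t-2}{3}$, and this is $\ge\sqrt{\psi_1}\,t$ precisely because $t_0\ge1$. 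Thus $\psi(3t-2)\ge t^2\psi_1$ for $t\ge t_0$, and together with the range $1\le t\le t_0$ settled above this is \eqref{eq:thm3} with $K=3$.

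The main obstacle is twofold. First, one has to recognise that the isoperimetric reduction of Section \ref{sec:proof} is genuinely useless once the measure exceeds $c$ (Remark \ref{remark2} shows the corresponding three–dimensional inequality fails there) and replace it by the plain Gaussian isoperimetric inequality. Second — and this is what forces $K>1$ — the profile $I$ must be bounded below near measure $1$ strongly enough: the cylinder is far from isoperimetric there, so the naive guess $I(x)\ge(1-x)\sqrt{2\ln\frac1{1-x}}$ is false, and one needs the sharper asymptotic behaviour of $I$, captured here by the constant $\tfrac13$, which is exactly what produces $K=3$.
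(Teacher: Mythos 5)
Your argument is correct, but it takes a genuinely different route from the paper. The paper, after the same reduction of the small-measure case via Corollary \ref{cor1}, invokes the real S-inequality of Lata{\l}a and Oleszkiewicz to compare $A$ with a strip $S$ of equal measure, and then reduces \eqref{eq:thm3} to the scalar inequality $\left(2T(s)\right)^{t^2}\ge 2T\left((3t-2)s\right)$ for $s\ge s_0$, proved by Pr\'ekopa--Leindler for $1\le t\le 2$ and by Gaussian tail estimates plus calculus for $t\ge 2$. You bypass the Lata{\l}a--Oleszkiewicz theorem entirely: after anchoring at $t_0$ exactly as the paper does, you run a differential inequality for $\psi(t)=-\ln\left(1-\nu_n(tA)\right)$, combining $\frac{\dd}{\dd t}\nu_n(tA)\ge w\,\nu_n^+(tA)$, the inclusion-based bound $w\ge p$, the classical Gaussian isoperimetric inequality, and the profile estimate $I(x)\ge\frac13(1-x)\sqrt{2\ln\frac{1}{1-x}}$ on $[\frac12,1)$, which integrates to $\sqrt{\psi(3t-2)}\ge t\sqrt{\psi_1}$ because $t_0\ge1$. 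The only step you state without proof is that profile estimate; it is true and elementary --- for $0\le u\le 1$ use $T(u)\le\frac12$, $\varphi(u)\ge\varphi(1)$ and the monotonicity of $s\mapsto s\sqrt{-2\ln s}$ on $(0,e^{-1/2})$, while for $u\ge1$ use $T(u)\le\varphi(u)/u$ and $\ln(2\pi u^2)\le 8u^2$ --- so it is not a gap, but it should be written out, as should the smoothness of $t\mapsto\nu_n(tA)$ (differentiate under the integral after the change of variables $z\mapsto tz$). What each approach buys: the paper leans on the deep real-case dilation theorem but then needs only one-dimensional inequalities about $T$; yours uses only the standard isoperimetric inequality and a soft ODE argument, at the cost of the (routine) profile bound, and your closing observation that the naive constant $1$ in that bound fails near measure $1$ --- which is what forces $K>1$ along this route --- is also correct.
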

\begin{proof} Let us denote $\ell (t) = 1+K(t-1)$.

It suffices to prove \eqref{eq:thm3} only for sets with big measure, i.e. $\nu_n(A) \geq c$, where $c$ is the constant from Theorem \ref{thm1}. Indeed, assume that \eqref{eq:thm3} holds for all convex rotationally symmetric sets $A$ such that $\nu_n(A) \geq c$. We are going to show this inequality also for a set $A$ with the measure less than $c$. Let us fix such a set and take $t_0 >1$ such that $\nu_n(t_0A) = c$. From Corollary \ref{cor1} we get
\[ \nu_n(tA) \geq \nu_n(tP), \qquad t \leq t_0. \]
Now, we are to prove \eqref{eq:thm3} for $t > t_0$. Let $Q$ be a cylinder with the same measure as $t_0A$. Applying what we have assumed we obtain
\begin{equation}\label{eq:proofthm3:1}
\nu_n\left (\ell (t) (t_0A)\right ) \geq \nu_n(tQ), \qquad t \geq 1.
\end{equation}
One can make two simple observations
\begin{align*}
\ell (t)t_0 &< \ell(t_0t), \\
\nu_n(Q)=\nu_n(t_0A) \geq \nu_n(t_0P) &\Longrightarrow \nu_n(tQ) \geq \nu_n(tt_0P).
\end{align*}
Together with the inequality \eqref{eq:proofthm3:1} this yields
\[ \nu_n\left (\ell (tt_0)A\right ) \geq \nu_n(tt_0P), \qquad t \geq 1, \]
what is just the desired inequality.

Henceforth, we are going to deal with the proof of inequality \eqref{eq:thm3} in the case of $\nu_n(A) \geq c$. The idea is to exploit the deep result of Lata{\l}a and Oleszkiewicz concerning dilations in the real case. Namely, from Theorem 1  of \cite{latole} we have
\[ \nu_n\left (\ell (t) A\right ) \geq \nu_n\left (\ell (t) S\right ), \qquad t \geq 1, \]
where
\[ S = \{(z_1, \ldots, z_n) \in \C^n \ | \ | \mathrm{Re} z_1 | \leq s\}, \]
is a strip of the width $2s$ chosen so that $\nu_n(A) = \nu_n(S) = 1-2T(s)$. Therefore, we end the proof, providing that we show 
\[ \nu_n\left (\ell (t) S\right ) \geq \nu_n(tP), \qquad t \geq 1.\]
This inequality in turn can be written more explicitly. We have
\[  \nu_n\left (\ell (t) S\right ) = 1 - 2T\left (\ell (t)s\right ),\]
and using the relation $1 - e^{-p^2/2} = \nu_n(P) = \nu_n(A) = \nu_n(S) = 1-2T(s)$ we get $e^{-p^2/2} = 2T(s)$. Hence
\[ \nu_n(tP) = 1-e^{-(tp)^2/2} = 1-\left (2T(s)\right )^{t^2}. \]
Thus it is enough to show that
\begin{equation}\label{eq:proofthm3:2}
\left (2T(s)\right )^{t^2} \geq 2T\left (\ell (t)s\right ), \qquad t \geq 1, \ s \geq s_0,
\end{equation} 
where $s_0$ is such that a strip with the width $2s_0$ has the measure equals to $c$, i.e. $1-2T(s_0) = c$. Since $c > 0.64$, it follows that $T(s_0) < 0.18 < T(0.9)$, so $s_0 > 0.9$.

Let us deal with the inequality \eqref{eq:proofthm3:2}. For $t$ close to $1$ we will apply the Pr\'ekopa-Leindler inequality \cite{Gardner}, Theorem 7.1. To see this, let us fix $s \geq s_0$ and $t \geq 1$ and consider functions
\begin{align*}
f(x) &= \frac{2}{\sqrt{2\pi}}e^{-x^2/2}\I_{[\ell (t) s, \infty)}(x), \\
g(x) &= \frac{2}{\sqrt{2\pi}}e^{-x^2/2}\I_{[0, \infty)}(x), \\
h(x) &= \frac{2}{\sqrt{2\pi}}e^{-x^2/2}\I_{[s, \infty)}(x).
\end{align*}
It is not hard to assert that $f(x)^{1/t^2}g(y)^{1-1/t^2} \leq h\left (\frac{1}{t^2}x + \left (1-\frac{1}{t^2}\right )y\right )$ holds for any $x, y \in \R$ if and only if $\ell(t) s \geq t^2s$, or equivalently $t \leq K-1 = 2$. Then, by virtue of Pr\'ekopa-Leindler inequality, we obtain
\[ \left (2T\left (\ell (t)s\right )\right )^{1/t^2} = \left (\int_\R f\right )^{1/t^2}\left (\int_\R g\right )^{1-1/t^2} \leq \int_\R h = 2T(s). \]

Now we are left with the proof of \eqref{eq:proofthm3:2} in the case of $t > 2$ and $s \geq s_0$. To handle it, we use the asymptotic behaviour of the function $T$ and conduct some tedious calculations. In accordance with the standard estimate from above of the tail probability of the Gaussian distribution we get
\[ T\left (\ell (t)s\right ) <  \frac{1}{\sqrt{2\pi}}\frac{1}{\ell (t) s}e^{-\ell (t)^2s^2/2},\]
whereas from Lemma 2, \cite{latole}
\[ T(s) >  \frac{1}{\sqrt{2\pi}}\frac{1}{\sqrt{s^2+2}}e^{-s^2/2}.\]
Therefore, in order to show \eqref{eq:proofthm3:2} it is enough to prove
\[ \left (\frac{2}{\sqrt{2\pi}} \right )^{t^2}\frac{1}{(s^2+2)^{t^2/2}}e^{-t^2s^2/2} \geq \frac{2}{\sqrt{2\pi}}\frac{1}{\ell (t) s}e^{-\ell (t)^2s^2/2},\]
which is equivalent to the inequality
\[ \exp\left (\frac{s^2}{2}\left (\ell (t)^2 - t^2\right )\right ) \geq \left (\sqrt{\frac{\pi}{2}}\right )^{t^2-1}\frac{(s^2+2)^{t^2/2}}{\ell(t) s}, \qquad s \geq s_0, \ t \geq 2.\]
Taking the logarithm of both sides, putting the definition of $\ell (t) = 1 + K(t-1) = 3t - 2$ and simplifying we have to prove
\[ \left (8s^2 - \ln \left (\frac{\pi}{2}\left (s^2+2\right )\right )\right )t^2 - 12s^2t + 4s^2 + \ln \left (\frac{\pi}{2}s^2\right ) + 2\ln\left (3t-2\right ) \geq 0. \]
Let us call the left hand side by $F(s, t)$. Notice that
\begin{align*}
\frac{\partial F}{\partial t}(s, t) &= 2\left (8s^2 - \ln \left (\frac{\pi}{2}\left (s^2+2\right )\right )\right )t - 12s^2 + \frac{2}{3t-2} \\
&> 2\left (5s^2 - \ln \left (\frac{\pi}{2}\left (s^2+2\right )\right )\right )t > 2\left (5s^2 - \frac{\pi}{2e}(s^2+2)\right )t \\
&= 2\left (\left (5-\frac{\pi}{2e}\right )s^2 - \frac{\pi}{e}\right )t \geq 2\left (\left (5-\frac{\pi}{2e}\right )s_0^2 - \frac{\pi}{e}\right )t \\
&> 2\left (\left (5-\frac{\pi}{2e}\right )\cdot 0.81 - \frac{\pi}{e}\right )t > 0,
\end{align*}
where in the first inequality we used only the assumption that $t>2$ getting $-12s^2 > -6ts^2$ and neglected the term $\frac{2}{3t-2}$ as being positive, while in the second one we evoked well-known inequality $\ln x \leq \frac{x}{e}$. Knowing that this derivative is positive, we will finish if we check that $F(s, 2) > 0$. However, it can be done by direct computation
\begin{align*}
F(s,2) &= 4\left (8s^2 - \ln \left (\frac{\pi}{2}\left (s^2+2\right )\right )\right ) - 24s^2 + 4s^2 + \ln s^2 + \ln \frac{\pi}{2} + 2\ln 4 \\
&= 4\left (3s^2 - \ln \left (\frac{\pi}{2}\left (s^2+2\right )\right )\right ) + \ln \left (8\pi s^2\right ) \\
&> 4\left (\left (3-\frac{\pi}{2e}\right )s^2 - \frac{\pi}{e}\right ) > 0.
\end{align*}
The proof is now complete.
\end{proof}
\end{remark}

\section{Technical lemmas}\label{sec:lemmas}

We are going to prove some rather technical lemmas which have helped us with the proof of  Lemma \ref{lm3}.

\begin{lm}\label{lm1}
The function $F$, defined in \eqref{eq:defF}, is increasing and onto $(0, \infty)$.
\end{lm}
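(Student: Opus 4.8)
We first note that, since $0<T(y)<1$ for every $y\in\R$, we have $\ln T(y)<0$, hence $F(y)>0$; so $F$ indeed takes its values in $(0,\infty)$. Write $\varphi(y)=\frac{1}{\sqrt{2\pi}}e^{-y^2/2}$ for the standard Gaussian density and set $\alpha(y)=\sqrt{2\pi}e^{y^2/2}T(y)=T(y)/\varphi(y)$, the associated Mills-ratio-type function. A direct computation gives $\alpha'(y)=y\alpha(y)-1$ and $\frac{\dd}{\dd y}\bigl(-\ln T(y)\bigr)=\varphi(y)/T(y)=1/\alpha(y)$. Since $F=\alpha\cdot(-\ln T)$, the product rule yields
\[
F'(y)=\alpha'(y)\bigl(-\ln T(y)\bigr)+1=1+\bigl(y\alpha(y)-1\bigr)\bigl(-\ln T(y)\bigr).
\]

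I would next note that $\alpha'(y)=y\alpha(y)-1<0$ for every $y\in\R$: this is obvious for $y\le0$, and for $y>0$ it is the classical estimate $T(y)<\varphi(y)/y$. As $-\ln T(y)>0$ as well, proving $F'>0$ is then \emph{equivalent} to the single one-variable inequality
\[
\bigl(1-y\alpha(y)\bigr)\bigl(-\ln T(y)\bigr)<1,\qquad y\in\R,\qquad(\star)
\]
and this is the technical heart of the lemma. It is genuinely delicate: a short computation with the standard tail asymptotics ($T(y)\sim\varphi(y)/y$ as $y\to+\infty$, and $\Phi(y)\sim\varphi(y)/|y|$, $-\ln T(y)\sim\Phi(y)$, $\alpha(y)\sim1/\varphi(y)$ as $y\to-\infty$) shows that the left-hand side of $(\star)$ tends to $\tfrac12$ as $y\to+\infty$ but to $1$ as $y\to-\infty$; so $(\star)$ holds with no uniform margin, and soft tail bounds alone will not close it.

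My plan for $(\star)$ is to prove that $G(y):=\bigl(1-y\alpha(y)\bigr)\bigl(-\ln T(y)\bigr)=-\alpha'(y)\ln T(y)$ is strictly decreasing on $\R$; combined with the limits $G(y)\to1$ ($y\to-\infty$) and $G(y)\to\tfrac12$ ($y\to+\infty$) and continuity, this forces $G<1$ everywhere. Differentiating, $G'(y)=-\alpha''(y)\bigl(-\ln T(y)\bigr)-\alpha'(y)/\alpha(y)$, and one checks $\alpha''(y)=\alpha(y)(1+y^2)-y>0$ for all $y$ (trivial for $y\le0$; for $y>0$ it is the classical lower bound $T(y)>\tfrac{y}{1+y^2}\varphi(y)$, i.e.\ $\alpha$ is convex). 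Hence $G'<0$ is equivalent to
\[
\alpha(y)\,\alpha''(y)\bigl(-\ln T(y)\bigr)\,>\,1-y\alpha(y),\qquad y\in\R,
\]
which I would establish by splitting into $y\le0$ and $y\ge0$ and inserting the two-sided bounds $y<1/\alpha(y)<\sqrt{y^2+2}$, valid for $y>0$ (Lemma~2 of \cite{latole}), together with the elementary estimate $-\ln(1-x)\le x/(1-x)$ applied with $x=\Phi(y)$; this leaves inequalities in a single real variable to be checked by direct estimation, the range of moderately negative $y$ (where $G$ is close to $1$) demanding the sharpest bounds. An equivalent reformulation, which I might use instead, is that $(\star)$ says exactly that $y\mapsto\ln\bigl(-\ln T(y)\bigr)$ is concave, or, after the substitution $s=T(y)$, that $s\mapsto-s\ln s\big/\varphi\bigl(\Phi^{-1}(s)\bigr)$ is decreasing on $(0,1)$; the latter can be attacked via the identity $\frac{\dd}{\dd s}\varphi\bigl(\Phi^{-1}(s)\bigr)=-\Phi^{-1}(s)$.

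Finally, granting monotonicity, surjectivity onto $(0,\infty)$ follows from continuity and the two limits: as $y\to+\infty$, $\alpha(y)\sim1/y$ and $-\ln T(y)\sim\tfrac12 y^2$, so $F(y)\sim\tfrac12 y\to\infty$; as $y\to-\infty$, $\alpha(y)\sim1/\varphi(y)$ while $-\ln T(y)\sim\Phi(y)\sim\varphi(y)/|y|$, so $F(y)\sim1/|y|\to0^+$. Thus $F$ is a strictly increasing continuous bijection of $\R$ onto $(0,\infty)$. The main obstacle I anticipate is exactly the verification of $(\star)$: the reduction to monotonicity of $G$ is conceptually clean, but the remaining one-variable inequality must be controlled precisely in the regime where $G\to1$, which will need the sharp Gaussian tail bounds from \cite{latole} rather than crude ones.
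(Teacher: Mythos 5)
Your reduction is correct as far as it goes: with $\alpha(y)=\sqrt{2\pi}e^{y^2/2}T(y)$ one indeed has $F'=\alpha'\cdot(-\ln T)+1$ and $\alpha'=y\alpha-1<0$, so the lemma amounts to your inequality $(\star)$, and the limits you compute do give surjectivity once monotonicity is known. But the proposal stops exactly where the work begins: the monotonicity of your $G$, i.e.\ the inequality $\alpha(y)\alpha''(y)\bigl(-\ln T(y)\bigr)>1-y\alpha(y)$, is never proved --- you only say you ``would establish'' it by case-splitting and inserting bounds. That is a genuine gap, not a routine verification, and the tools you name do not close it: the two-sided bound $y<1/\alpha(y)<\sqrt{y^2+2}$ from \cite{latole} is a statement about $y>0$, while the critical regime (as you yourself note, where $G\to1$) is $y\to-\infty$; and $-\ln(1-x)\le x/(1-x)$ is an \emph{upper} bound on $-\ln T$, whereas your inequality requires a \emph{lower} bound on $-\ln T$. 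Moreover the obvious lower bound $-\ln T\ge\Phi$ provably does not suffice: as $y\to-\infty$ both $\alpha\alpha''\,\Phi$ and $1-y\alpha$ are asymptotic to $|y|e^{y^2/2}\sqrt{2\pi}$ times $1$, and their difference is governed by second-order terms of the Gaussian tail, so one needs Komatsu-type estimates sharp to that order; none are supplied. (A small separate slip: $G=(1-y\alpha)(-\ln T)$ equals $(-\alpha')(-\ln T)=\alpha'\ln T$, not $-\alpha'\ln T$; your formula for $G'$ is nevertheless the correct one.)

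For comparison, the paper sidesteps this delicate two-term comparison entirely. It observes that $1/F(y)=\bigl(\ln(-\ln T(y))\bigr)'$, so the monotonicity of $F$ is exactly the concavity of $y\mapsto\ln\bigl(-\ln T(y)\bigr)$ --- the reformulation you mention only in passing --- and it proves this concavity by writing $-\ln T(x)=\int_{-\infty}^x \frac{e^{-t^2/2}}{\sqrt{2\pi}\,T(t)}\,\dd t$ and invoking the Pr\'ekopa--Leindler inequality, which reduces everything to the log-concavity of the integrand, i.e.\ to the single closed-form bound $\sqrt{2\pi}e^{t^2/2}T(t)\ge\frac{\sqrt{t^2+4}-t}{2}$ for all real $t$; this is Komatsu's estimate for $t\ge0$ and an elementary computation for $t<0$. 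Either some such structural device, or a completed quantitative analysis of your $G$ with second-order tail bounds in the regime of negative $y$, is what your argument is missing.
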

\begin{proof}
As far as the monotonicity is concerned, it suffices to prove that $F$ is nondecreasing. Indeed, if $F$ were be constant on some interval, it would be constant everywhere as $F$ is an analytic function.

Clearly, $F$ is nondecreasing iff $1/F$ is nonincreasing. Notice that
\begin{align*}
\frac{1}{F(y)} &= \frac{-e^{-y^2/2}}{\sqrt{2\pi}}\frac{1}{T(y)\ln T(y)} = \frac{T'(y)}{T(y)\ln T(y)} \\
&= \frac{\left (-\ln T(y)\right )'}{-\ln T(y)} = \left (\ln \left (-\ln T(y)\right )\right )',
\end{align*}
thus $1/F$ is nonincreasing iff $y \longmapsto \ln \left (-\ln T(y)\right )$ is concave, that is for any $x, y \in \R$, $\lambda \in (0, 1)$
\[ -\ln T(\lambda x + (1-\lambda)y) \geq \left (-\ln T(x)\right )^\lambda\left (-\ln T(y)\right )^{1-\lambda}. \]
Since $\lim_{x\to -\infty} (-\ln T(x)) = 0$, we have
\[ -\ln T(x) = \int_{-\infty}^x \left (-\ln T(t)\right )'\dd t = \int_{-\infty}^x \frac{e^{-t^2/2}}{\sqrt{2\pi}T(t)} \dd t, \]
and the above inequality will hold by virtue of the Pr\'ekopa-Leindler inequality providing that we check the assumption. In our case it reduces to verify whether the function $\ln \frac{e^{-t^2/2}}{\sqrt{2\pi}T(t)}$ is concave. Calculating the second derivative one can easily check that it is non-positive iff
\begin{align*}
0 &\geq T(t)^2 + \frac{e^{-t^2/2}}{\sqrt{2\pi}}tT(t) - \left (\frac{e^{-t^2/2}}{\sqrt{2\pi}}\right )^2 \\
&= \left (T(t) - \frac{e^{-t^2/2}}{\sqrt{2\pi}}\frac{\sqrt{t^2+4}-t}{2}\right )\left (T(t) + \frac{e^{-t^2/2}}{\sqrt{2\pi}}\frac{\sqrt{t^2+4}+t}{2}\right ), \quad t \in \R,
\end{align*}
which is equivalent to
\[ T(t) \geq \frac{e^{-t^2/2}}{\sqrt{2\pi}}\frac{\sqrt{t^2+4}-t}{2}, \qquad t \in \R. \]
For $t \geq 0$ it follows from a well-known Komatsu's estimate (cf. \cite{itomckean}, page 17). For $t < 0$ we have $T(t) > 1/2$, hence 
\[2T(t)\sqrt{2\pi}e^{t^2/2}+t \geq \sqrt{2\pi}(1+t^2/2) + t > 0,\]
and
\begin{align*}
\left (2T(t)\sqrt{2\pi}e^{t^2/2} + t\right )^2 &> \left (\sqrt{2\pi}(1+t^2/2) + t\right )^2 \\
&= 2\pi \left (1+\frac{t^2}{2}\right )^2 + 2\sqrt{2\pi}\left (1+\frac{t^2}{2}\right )t + t^2 \\
&= 2\left (1+\frac{t^2}{2}\right )\left (\frac{\pi}{2}t^2 + \sqrt{2\pi}t + \pi\right ) + t^2 \\
&> 2\left (\left (\sqrt{\frac{\pi}{2}}t + 1\right )^2 + \pi - 1\right ) + t^2 \\
&> 2(\pi - 1) + t^2 > t^2 + 4.
\end{align*}
This completes the proof of the monotonicity of $F$. 

$F$ is onto $(0, \infty)$ as
\begin{align*}
F(y) &\xrightarrow[y\to -\infty]{} 0, \\
F(y) &\xrightarrow[y\to +\infty]{} \infty.
\end{align*}
\end{proof}

\begin{lm}\label{lm2}
The function $G$, defined in \eqref{eq:defG}, is increasing for $u \geq \sqrt{8/\pi}.$
\end{lm}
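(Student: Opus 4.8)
The plan is to differentiate $G$ directly and reduce the monotonicity to an elementary one–variable inequality. Writing $G(y)=\frac{y}{2(1-e^{-y^2/2})}$ and differentiating, one obtains
\[ G'(y) = \frac{1-(1+y^2)e^{-y^2/2}}{2\left(1-e^{-y^2/2}\right)^2}, \]
so that $G'(y)\geq 0$ if and only if $e^{y^2/2}\geq 1+y^2$. Hence the lemma amounts to showing that this last inequality holds for every $y$ with $y^2\geq 8/\pi$; note that it genuinely fails for small $y$ (the right side grows faster near $0$), which is why a threshold is needed.

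Next I would substitute $s=y^2/2$, so the claim becomes $e^s\geq 1+2s$ for $s\geq 4/\pi$. Put $\psi(s)=e^s-1-2s$; then $\psi'(s)=e^s-2>0$ as soon as $s>\ln 2$. Since $4/\pi>\ln 2$, the function $\psi$ is increasing on $[4/\pi,\infty)$, and therefore it suffices to check the single inequality $\psi(4/\pi)\geq 0$, i.e.
\[ e^{4/\pi}\geq 1+\frac{8}{\pi}. \]
Once this is established, $\psi\geq 0$ on $[4/\pi,\infty)$, hence $G'\geq 0$ on $[\sqrt{8/\pi},\infty)$, and since $G$ is analytic and $G'$ is not identically zero there, $G$ is (strictly) increasing on that interval, which is the assertion.

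The only delicate point is the numerical verification of $e^{4/\pi}\geq 1+8/\pi$, which is nearly an equality (both sides are close to $3.55$). I would handle it by a careful lower bound: using $4/\pi>1.27$ one writes $e^{4/\pi}>e\cdot e^{0.27}$ and estimates $e^{0.27}$ from below by the partial sum of its power series, getting $e^{4/\pi}>3.55>1+8/\pi$; alternatively one can use the tangent–line bound $e^s\geq e^{4/\pi}(1+s-4/\pi)$ together with $e^{4/\pi}>2$ to conclude $e^s>1+2s$ for all $s\geq 4/\pi$ at once. Everything apart from this tight constant check is routine calculus.
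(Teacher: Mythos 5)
Your proposal is correct and takes essentially the same route as the paper: differentiate $G$, observe $G'(u)\geq 0$ iff $e^{u^2/2}\geq 1+u^2$, and settle this on $u^2\geq 8/\pi$ via the endpoint inequality $e^{4/\pi}>1+8/\pi$. You merely make explicit two steps the paper leaves implicit, namely the monotonicity of $s\mapsto e^s-1-2s$ past $\ln 2$ and a careful numerical verification of the (tight) endpoint inequality, which is a welcome addition but not a different argument.
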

\begin{proof}
We have
\[ G'(u) = \frac{1-e^{-u^2/2}-u^2e^{-u^2/2}}{2\left (1-e^{-u^2/2}\right )^2}, \]
so $G'(u) > 0$ iff $e^{u^2/2} > 1+u^2$. It is true for $u^2 > 8/\pi$ since $e^{4/\pi} > 1 + 8/\pi$.
\end{proof}

\begin{proof}[Proof of Lemma \ref{lm3}]
\emph{(i)} Using the convexity of the function $-\ln$ we get
\begin{align*}
& -2\sqrt{2\pi}e^{y^2/2}T(y)\ln\left (T(y) + \Phi(y)e^{-u^2/2}\right ) \\
&\leq 2\sqrt{2\pi}e^{y^2/2}T(y)\left (-T(y)\ln 1 - \Phi(y)\ln e^{-u^2/2}\right ) \\
&= \sqrt{2\pi}e^{y^2/2}T(y)\Phi (y) u^2 \leq \sqrt{\frac{\pi}{8}}u^2 \leq u,
\end{align*}
where we use $\sup_{y \in \R} \sqrt{2\pi}e^{y^2/2}T(y)\Phi (y) = \sqrt{\frac{\pi}{8}}$ (see \cite{latole}, Lemma 5).

\emph{(ii)} Since $T(y) + \Phi (y)e^{-u^2/2} =  e^{-u^2/2} + (1-e^{-u^2/2})T(y)$, we may also apply the convexity of $-\ln$ to points $1$, $T(y)$ with weights $e^{-u^2/2}$, $1-e^{-u^2/2}$ and obtain
\begin{align*}
& -2\sqrt{2\pi}e^{y^2/2}T(y)\ln\left (T(y) + \Phi (y)e^{-u^2/2}\right ) \\
&\leq -2\sqrt{2\pi}e^{y^2/2}T(y)\ln T(y) (1-e^{-u^2/2}) \\
&= \frac{F(y)}{G(u)}u \leq \frac{F(H)}{G \left (\sqrt{8/\pi}\right )}u = u.
\end{align*}
\end{proof}

\section*{Acknowledgments}

The author would like to thank Prof. R. Lata{\l}a for introducing him into the subject as well as for many helpful comments and among them, especially for the counterexample stated in Remark \ref{remark2}.

\end{document}